\newcommand{\setA}{\mathscr{A}}
\newcommand{\setB}{\mathscr{B}}
\newcommand{\Hset}{\mathcal{H}}
\newcommand{\Lin}{\mathcal{L}}
\newcommand{\IRU}{\mathcal{U}}
\newcommand{\bbR}{\mathbb{R}}
\newtheorem{theorem}{Theorem}
\newtheorem{example}{Example}
\newtheorem{problem}{Problem}
\newtheorem{remark}{Remark}
\begin{document}
\date{}

\title{Constructive stability and stabilizability
of positive linear discrete-time switching systems}

\author{Victor Kozyakin\thanks{Kharkevich Institute for Information Transmission
Problems, Russian Academy of Sciences, Bolshoj Karetny lane 19, Moscow
127051, Russia, e-mail: kozyakin@iitp.ru\newline\indent~~Kotel'nikov
Institute of Radio-engineering and Electronics, Russian Academy of Sciences,
Mokhovaya 11-7, Moscow 125009, Russia}}

\maketitle

\begin{abstract}
We describe a new class of positive linear discrete-time switching systems
for which the problems of stability or stabilizability can be resolved
constructively. The systems constituting this class can be treated as a
natural generalization of systems with the so-called independently
switching state vector components. Distinctive feature of such systems is
that their components can be arbitrarily `re-connected' in parallel or in
series without loss of the `constructive resolvability' property for the
problems of stability or stabilizability of a system. It is shown also
that, for such systems, the individual positive trajectories with the
greatest or the lowest rate of convergence to the zero can be built
constructively.
\end{abstract}

\section{Introduction}
A linear discrete-time system
\begin{equation}\label{E:main}
x(n+1)=A(n)x(n),\quad x(n)\in\mathbb{R}^{N},
\end{equation}
is called \emph{switching} provided that the $(N\times N)$-matrices $A(n)$,
for each $n$, may arbitrarily take values from some set $\setA$.
System~\eqref{E:main} is called (asymptotically) \emph{stable} if, for each
sequence of matrices $A(n)\in\setA$, $n=0,1,\ldots$, the corresponding
solution $x(n)$ tends to zero. The asymptotic stability of switching
system~\eqref{E:main} is equivalent to the exponential convergence to zero of
each sequence $\{X(n)\}$ of the matrix products $X(n)=A(n)\cdots A(1)A(0)$
\cite{KKKK:DAN84:e,Bar:ARC88,Koz:AiT90:10:e,Gurv:LAA95,Koz:ICDEA04,SWMWK:SIAMREV07,LinAnt:IEEETAC09,ForValt:IEEETAC12},
which in turn is equivalent to the inequality
\begin{equation}\label{E-GSRle1}
\rho(\setA)<1.
\end{equation}
Here, the quantity $\rho(\setA)$, called~\cite{RotaStr:IM60} the \emph{joint
spectral radius} of the matrix set $\setA$, is defined as follows:
\begin{equation}\label{E-GSRad0}
\rho(\setA)=
\lim_{n\to\infty}\sup\left\{\|A_{n}\cdots A_{1}\|^{1/n}:~A_{i}\in\setA\right\},
\end{equation}
where $\|\cdot\|$ is an arbitrary norm on $\mathbb{R}^{N\times N}$.

For switching systems that are not stable, one may pose the question about
the existence of at least one sequence of matrices $A(n)\in\setA$,
$n=0,1,\ldots$, such that $A(n)\cdots A(1)A(0)\to 0$, that is, about
\emph{stabilization} of a system. It is
known~\cite{Gurv:LAA95,Theys:PhD05,Jungers:09,ShenHu:SIAMJCO12,BochiMor:PLMS15}
that system~\eqref{E:main} can be stabilized if the following inequality
holds:
\begin{equation}\label{E-LSRle1}
\check{\rho}(\setA)<1,
\end{equation}
where the quantity $\check{\rho}(\setA)$, called the \emph{lower spectral
radius} \cite{Gurv:LAA95} of the matrix set $\setA$, is as follows:
\begin{equation}\label{E-LSRad0}
\check{\rho}(\setA)=
\lim_{n\to\infty}\inf\left\{\|A_{n}\cdots A_{1}\|^{1/n}:~A_{i}\in\setA\right\}.
\end{equation}

Inequalities \eqref{E-GSRle1} and \eqref{E-LSRle1} might seem to give an
exhaustive answer to the questions on stability or stabilizability of a
switching system. This is indeed the case from the theoretical point of view.
However, in practice it is rather difficult, if at all possible, to calculate
in a closed formula form the limits in~\eqref{E-GSRad0} and~\eqref{E-LSRad0},
see, e.g., numerous negative results
in~\cite{BM:JAMS02,BTV:MTNS02,Koz:CDC05:e,CJ:IJAMCS07,Koz:AiT90:6:e,TB:MCSS97-1}.
This implies the need to make use of approximate computational methods.
Besides, currently there are no a priory estimates for the rate of
convergence of the limits~\eqref{E-GSRad0} and \eqref{E-LSRad0}, and the
required amount of computations rapidly increases in $n$ and the dimension of
the system, which exacerbates the difficulty in the usage of computational
methods. In this regard, we would like to note the following problems of
stability and stabilizability of linear switching systems, which are not new
per se, but remain to be relevant.

In this regard, we would like to note the following problems of stability and
stabilizability of linear switching systems, which are not new per se, but
remain to be relevant.

\begin{problem}\label{P:1}
How to describe the classes of switching systems (or equivalently, the
classes of matrix sets $\setA$), for which the joint spectral
radius~\eqref{E-GSRad0} could be constructively calculated?
\end{problem}

\begin{problem}\label{P:2}
How to describe the classes of switching systems (or equivalently, the
classes of matrix sets $\setA$), for which the lower spectral
radius~\eqref{E-LSRad0} could be constructively calculated?
\end{problem}

There is another circumstance that hampers the investigation of stability and
stabilizability of system \eqref{E:main}. This circumstance is barely
mentioned in the theory of convergence of matrix products but is of crucial
importance in control theory. The point is that, in control theory,
systems in general are composed not of a single block but of a number of
interconnected blocks. When these blocks are linear and functioning
asynchronously each of them is described by the equation
\begin{equation}\label{E:main-block}
x_{\text{out}}(n+1)=A_{i}(n)x_{\text{in}}(n),
\end{equation}
where $x_{\text{in}}(\cdot)\in\mathbb{R}^{N_{i}}$,
$x_{\text{out}}(\cdot)\in\mathbb{R}^{M_{i}}$, and the matrices $A_{i}(n)$,
for each $n$, may arbitrarily take values from some set $\setA_{i}$ of
$(N_{i}\times M_{i})$-matrices, where $i=1,2,\ldots,Q$ and $Q$ is the total
amount of blocks in the system.

\begin{figure}[htbp]
\centering
\includegraphics{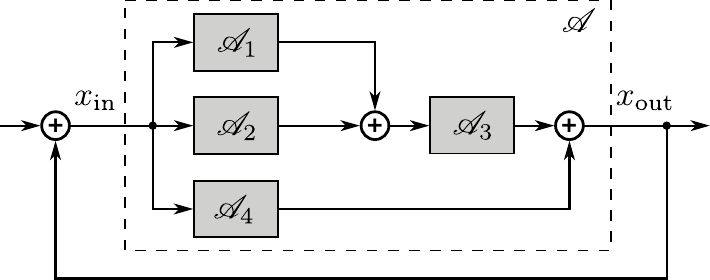}
\caption{An example of a series-parallel connection
of controllers of a system}\label{F-1}
\end{figure}

In this case it is natural to pose the question about stability or
stabilizability not for isolated blocks or controllers \eqref{E:main-block},
but for the system as a whole, whose blocks may be connected in parallel or
in series, or in a more complicated way, represented by some directed graph
with blocks of the form \eqref{E:main-block} placed on its edges, see
Fig.~\ref{F-1}. Unfortunately, under such a connection of blocks, the classes
of matrices describing the transient processes of a system as a whole became
very complicated and their properties are practically not investigated. As a
rule, even in the cases when the dimensions of the input-output vectors
coincide with each other and hence the question about stability or
stabilizability of a single block may be somehow answered, after a
series-parallel connection of such blocks, it is often impossible to
constructively resolve the question about the stability of the whole system
or, at the best, it is very difficult to get the desired answer. So, the
following problem is also urgent:

\begin{problem}\label{P:3}
How to describe the classes of switching systems for which the question about
stability or stabilizability can be constructively answered not only for an
isolated switching block \eqref{E:main} or \eqref{E:main-block} but also for
any series-parallel connection of such blocks?
\end{problem}

At last, let us consider one more aspect of the problem of constructive
stability or stabilizability of the switching systems.

The joint spectral radius~\eqref{E-GSRad0}, as well as the lower spectral
radius~\eqref{E-LSRad0}, provide only characterization of stability or
stabilizability of a system `as a whole'. They describe the limiting behavior
of the `multiplicatively averaged' norms of the matrix products,
$\|A(n-1)\cdots A(0)\|^{1/n}$. If one is interested in the study of stability
of a system, in typical situations, e.g. for the so-called \emph{irreducible}\footnote{A set of matrices is called irreducible if all the
matrices from this set do not have common invariant subspaces except the
trivial zero space and the whole space.}
classes of matrices $\setA$, \emph{for each}
sequence of matrices $\{A(n)\}$ the following estimate holds
\[
\|A(n-1)\cdots A(0)\|\le C\rho^{n}(\setA),
\]
see, e.g., \cite{Bar:ARC88}. In the case when one is interested in the study of
stabilizability of a system, in typical situations \emph{there exists} a
sequence of matrices $\{A(n)\}$ such that the following
estimate is valid:
\[
\|A(n-1)\cdots A(0)\|\le \check{C}\check{\rho}^{n}(\setA).
\]

At the same time there is often a need to find a sequence of matrices that
would ensure the slowest or fastest `decrease' not of the norms of matrix
products $\|A(n-1)\cdots A(0)\|$ but, for a given initial vector $x$, of the
vectors $A(n-1)\cdots A(0)x$. More precisely, let us consider a real function
$\nu(x)\equiv \nu(x_{1},x_{2},\ldots,x_{N})$ which is non-decreasing in each
coordinate $x_{i}$ of the vector $x=(x_{1},x_{2},\ldots,x_{N})$ and defined
for all $x_{1},x_{2},\ldots,x_{N}\ge0$. Such a function will be called
\emph{coordinate-wise monotone}, while in the case when it is strictly
increasing in each variable $x_{i}$ it will be called \emph{strictly
coordinate-wise monotone}. For example, each of the norms
\[
\|x\|_{1}=\sum_{i}|x_{i}|,\quad \|x\|_{2}=\sqrt{\sum_{i}|x_{i}|^{2}},\quad
\|x\|_{\infty}=\max_{i}|x_{i}|,
\]
is a coordinate-wise monotone function. Moreover, the norms $\|x\|_{1}$ and
$\|x\|_{2}$ are strictly coordinate-wise monotone whereas the norm
$\|x\|_{\infty}$ is coordinate-wise monotone but not strictly coordinate-wise
monotone.

If a set of matrices $\setA$ is finite and consists of $K$ elements then to
find the value of
\[
\max_{A\in\setA}\nu(Ax)
\]
it is needed, in general, to compute $K$ times the values of the function
$\nu(\cdot)$, and then to find their maximum. Similarly, to find the value of
\begin{equation}\label{E:nun}
\max_{A_{i_{j}}\in\setA}\nu(A_{i_{n}}\cdots A_{i_{1}}x)
\end{equation}
one need, in general, to compute $K^{n}$ times the values of the function
$\nu(\cdot)$, and then to find their maximum, which leads to an exponential
in $n$ growth of the number of required computations. Therefore, it is
reasonable to put the following problem:

\begin{problem}\label{P:4}
Given a coordinate-wise monotone function $\nu(\cdot)$ and a vector $x\neq0$.
How to describe the classes of switching systems (or equivalently, the
classes of matrix sets $\setA$), for which the number of computations of the
function $\nu(\cdot)$ needed to find the quantity \eqref{E:nun} would be less
than $K^{n}$? It is desirable that the required number of computations would
be of order $Kn$.
\end{problem}

Clearly, a similar problem about minimization of the quantity
$\nu(A_{i_{n}}\cdots A_{i_{1}}x)$ can also be posed.

In connection with this, our aim is to describe a class of asynchronous
blocks or controllers~\eqref{E:main}, rather simple and natural in
applications, for which one can obtain affordable answers to
Problems~\ref{P:1}--\ref{P:4}.

In Section~\ref{S:IRU}, we recall some facts from the theory of matrix
products.

\section{Sets of matrices with constructively computable spectral
characteristics}\label{S:IRU}

One of classes of matrix sets whose characteristics \eqref{E-GSRad0} and
\eqref{E-LSRad0} may be constructively calculated is the so-called class of
positive matrix sets with independent row
uncertainty~\cite{BN:SIAMJMAA09}. Recall the related definitions.

In accordance with~\cite{BN:SIAMJMAA09}, a set of $N\times M$-matrices
$\setA$ is called a \emph{set with independent row uncertainty}, or an
\emph{IRU-set}, if it consists of all the matrices
\[
A=\begin{pmatrix}
a_{11}&a_{12}&\cdots&a_{1M}\\
a_{21}&a_{22}&\cdots&a_{2M}\\
\cdots&\cdots&\cdots&\cdots\\
a_{N1}&a_{N2}&\cdots&a_{NM}
\end{pmatrix},
\]
each row $a_{i} = (a_{i1}, a_{i2}, \ldots, a_{iM})$ of which belongs to some
set of rows $\setA^{(i)}$, $i=1,2,\ldots,N$. An IRU-set of matrices will be
referred to as positive if all its matrices are positive, which is equivalent
to the positivity of all strings composing the sets $\setA^{(i)}$. The
totality of all IRU-sets of positive $(N\times M)$-matrices will be denoted
by $\IRU(N,M)$.
\begin{example}\label{Ex:0}\rm Let the sets of rows $\setA^{(1)}$ and $\setA^{(2)}$
be as follows:
\[
\setA^{(1)}=\{(a,b),~(c,d)\},~
\setA^{(2)}=\{(\alpha,\beta),~(\gamma,\delta),~(\mu,\nu)\}.
\]
Then the IRU-set $\setA$ consists of the following matrices:
\begin{alignat*}{3}
A_{11}&=\begin{pmatrix}a&b\\\alpha&\beta \end{pmatrix},&~
A_{12}&=\begin{pmatrix}a&b\\\gamma&\delta \end{pmatrix},&~
A_{13}&=\begin{pmatrix}a&b\\\mu&\nu \end{pmatrix},\\
A_{21}&=\begin{pmatrix}c&d\\\alpha&\beta \end{pmatrix},&~
A_{22}&=\begin{pmatrix}c&d\\\gamma&\delta \end{pmatrix},&~
A_{23}&=\begin{pmatrix}c&d\\\mu&\nu \end{pmatrix}.
\end{alignat*}
\end{example}

If a set $\setA$ is compact, which is equivalent to the compactness of each
set of rows $\setA^{(1)}$, $\setA^{(2)}$, \ldots, $\setA^{(N)}$, then the
following quantities are well defined:
\[
\rho_{min}(\setA) = \min_{A \in \setA} \rho(A), \quad
\rho_{max}(\setA) = \max_{A \in \setA} \rho(A).
\]
As is shown in~\cite{NesPro:SIAMJMAA13,Koz:LAA16},
\begin{equation}\label{E:finrel} \rho(\setA)=\rho_{max}(\setA),\quad
\check{\rho}(\setA)=\rho_{min}(\setA),
\end{equation}
for positive compact IRU-sets of matrices $\setA$, whereas for arbitrary sets
of matrices the equalities in~\eqref{E:finrel} are not valid,
see~\cite[Example~1]{Koz:LAA16}.

For finite IRU-sets of matrices $\setA$, the quantities $\rho_{min}(\setA)$
and $\rho_{max}(\setA)$ can be constructively calculated, and therefore due
to \eqref{E:finrel}, for finite IRU-sets of positive matrices, the quantities
$\rho(\setA)$ and $\check{\rho}(\setA)$ are also can be constructively
calculated. An efficient computational algorithm for finding the quantities
$\rho_{min}(\setA)$ and $\rho_{max}(\setA)$, for various IRU-sets of matrices
$\setA$, is proposed in \cite{Prot:MP15}.

Another example of classes of matrices, for which the quantities
\eqref{E-GSRad0} and \eqref{E-LSRad0} can be constructively calculated, is
given by the so-called \emph{linearly ordered} sets of positive matrices
$\setA=\{A_{1},A_{2},\ldots,A_{n}\}$, that is, such sets of matrices for
which $0< A_{1}< A_{2}<\cdots< A_{n}$, where the inequalities are meant
element-wise. For this class of matrices, equalities~\eqref{E:finrel} follow
from the known relations between the spectral radii of comparable positive
matrices~\cite[Corollary 8.1.19]{HJ2:e}. The totality of all linearly ordered
sets of $(N\times M)$-matrices will be denoted by $\Lin(N,M)$.

It should be noted that controllers or blocks whose behavior is covered by
equations~\eqref{E:main} or \eqref{E:main-block} with IRU-sets of matrices
are rather common asynchronous controllers in control theory which perform
the so-called \emph{independent coordinate-wise correction of the state
vectors}. The controllers whose whose behavior is covered by
equations~\eqref{E:main} or \eqref{E:main-block} with linearly ordered sets
of matrices are a kind of \emph{amplifiers with `matrix' coefficients of
amplification} varying in time.

In~\cite{Koz:LAA16} it was observed that the proofs of
equalities~\eqref{E:finrel} for the IRU-sets of positive matrices, as well as
for the linearly ordered sets of positive matrices, may be obtained by the
same scheme, as a corollary of some general principle, which we now describe
in more detail.

\subsection{Hourglass alternative}

For vectors $x,y\in\bbR^{N}$, we write $x \ge y$ ($x>y$), if all coordinates
of the vector $x$ are not less (strictly greater), than the corresponding
coordinates of the vector $y$. Similar notation will be applied to matrices.

A set of positive matrices $\setA$ is called an $\Hset$-set~\cite{Koz:LAA16}
if, for any matrix $\tilde{A}\in\setA$ and any vector $x>0$, the following
assertions hold:
\begin{quote}\em
\begin{enumerate}[\rm H1:]
\item either $Ax\ge \tilde{A}x$ for all $A\in\setA$ or there exists a matrix
    $\bar{A}\in\setA$ such that $\bar{A}x\le \tilde{A}x$ and $\bar{A}x\neq \tilde{A}x$;

\item either $Ax\le \tilde{A}x$ for all $A\in\setA$ or there exists a matrix
    $\bar{A}\in\setA$ such that  $\bar{A}x\ge \tilde{A}x$ and $\bar{A}x\neq \tilde{A}x$.
\end{enumerate}
\end{quote}

\sloppy Assertions H1 and H2 have a simple geometrical interpretation.
Imagine that the sets \mbox{$\{u:u\le \tilde{A}x\}$} and \mbox{$\{u:u\ge
\tilde{A}x\}$} form the lower and upper bulbs of some stylized hourglass with
the neck at the point $\tilde{A}x$. Then, according to Assertions H1 and H2,
either all the `grains' $Ax$ fill one of the bulbs (upper or lower), or at
least one grain remains in the other bulb (lower or upper, respectively). In
\cite{Koz:LAA16}, such an interpretation gave reason to call Assertions H1
and H2 the \emph{hourglass alternative}.

\fussy The totality of all compact\footnote{The set of all $(N\times
M)$-matrices is naturally endowed by the topology of element-wise convergence
which allows do define the concept of compactness for the related sets of
matrices.} $\Hset$-sets of matrices of dimension $N\times M$ will be denoted
by $\Hset(N,M)$. Then the main result about the spectral properties of the
$\Hset$-sets of matrices can be formulated as follows.

\begin{theorem}[see \cite{Koz:LAA16}]\label{T:Hset}
Let $\setA\in\Hset(N,N)$. Then equalities~\eqref{E:finrel} hold.
\end{theorem}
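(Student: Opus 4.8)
The plan is to treat the two equalities in~\eqref{E:finrel} in parallel, since the hourglass alternative makes them completely symmetric. In both cases one inequality is immediate: taking $A(n)\equiv A$ for a fixed $A\in\setA$ gives $\|A^{n}\|^{1/n}\to\rho(A)$, so the definitions~\eqref{E-GSRad0} and~\eqref{E-LSRad0} yield at once $\rho(\setA)\ge\rho_{max}(\setA)$ and $\check{\rho}(\setA)\le\rho_{min}(\setA)$. The whole difficulty lies in the reverse inequalities $\rho(\setA)\le\rho_{max}(\setA)$ and $\check{\rho}(\setA)\ge\rho_{min}(\setA)$. The first step I would take is to reformulate the alternative into a usable form: for compact $\setA$ and any $x>0$, minimizing and maximizing the continuous functional $A\mapsto\mathbf{1}^{\top}Ax$ over $\setA$ produces matrices that are minimal, respectively maximal, for the coordinate-wise order on $\{Ax:A\in\setA\}$; Assertions H1 and H2 then upgrade ``minimal/maximal'' to ``least/greatest''. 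Thus there exist $A_{\min}(x),A_{\max}(x)\in\setA$ with $A_{\min}(x)x\le Ax\le A_{\max}(x)x$ for all $A\in\setA$, so the coordinate-wise envelopes of $\{Ax\}$ are each realized by a single member of $\setA$.

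Next I would introduce the homogeneous, order-preserving operators $S(x)=A_{\min}(x)x$ and $T(x)=A_{\max}(x)x$ and show, by induction on $n$ using positivity (monotonicity) of the matrices together with the attainment of these envelopes, that $S^{n}(x)=\inf_{A_{i}\in\setA}A_{n}\cdots A_{1}x$ and $T^{n}(x)=\sup_{A_{i}\in\setA}A_{n}\cdots A_{1}x$, both coordinate-wise. Evaluating at $x=\mathbf{1}$ and using that for positive matrices and a coordinate-wise monotone norm one has $\|B\|\asymp\|B\mathbf{1}\|$, the fact that the envelope is realized by a single ``greedy'' trajectory lets me identify the growth rates of these operators with the spectral quantities, namely $\rho(\setA)=\lim_{n}\|T^{n}(\mathbf{1})\|^{1/n}$ and $\check{\rho}(\setA)=\lim_{n}\|S^{n}(\mathbf{1})\|^{1/n}$.

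The heart of the argument is then a Perron--Frobenius step for the nonlinear maps $S$ and $T$. Applying Brouwer's theorem to the self-maps $x\mapsto T(x)/\|T(x)\|_{1}$ and $x\mapsto S(x)/\|S(x)\|_{1}$ of the standard simplex, I obtain eigenvectors $v,w>0$ with $T(v)=\lambda v$ and $S(w)=\mu w$; squeezing $\mathbf{1}$ between multiples of $v$, respectively $w$, and using homogeneity identifies $\lambda=\rho(\setA)$ and $\mu=\check{\rho}(\setA)$. The decisive point is that at the eigenvector the operator degenerates to a single matrix, $A_{\max}(v)v=\lambda v$ and $A_{\min}(w)w=\mu w$, and because $v,w$ are \emph{strictly} positive, the Perron--Frobenius theorem forces $\lambda=\rho\bigl(A_{\max}(v)\bigr)$ and $\mu=\rho\bigl(A_{\min}(w)\bigr)$. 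Hence $\rho(\setA)=\rho\bigl(A_{\max}(v)\bigr)\le\rho_{max}(\setA)$ and $\check{\rho}(\setA)=\rho\bigl(A_{\min}(w)\bigr)\ge\rho_{min}(\setA)$, which together with the trivial inequalities close both equalities in~\eqref{E:finrel}.

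I expect the main obstacle to be exactly the \emph{strict} positivity of the fixed points $v$ and $w$. It is needed twice: to identify the operator eigenvalues with the growth rates $\rho(\setA)$ and $\check{\rho}(\setA)$ through the squeeze of $\mathbf{1}$, and, most critically for the lower equality, to convert the mere bound $\mu\le\rho\bigl(A_{\min}(w)\bigr)$ into the equality $\mu=\rho\bigl(A_{\min}(w)\bigr)$ without which the lower estimate collapses. If the normalized operators pushed mass onto the boundary of the simplex, the eigenvector could acquire zero coordinates, the hourglass alternative (stated only for $x>0$) would fail to apply at it, and the concluding Perron--Frobenius identification would break down. I would resolve this by exploiting that strictly positive matrices map the punctured cone into its interior, so that $S$ and $T$ automatically have interior fixed points, and in the general positive case by approximating $\setA$ with strictly positive $\Hset$-sets and passing to the limit via continuity of $\rho(\cdot)$, $\check{\rho}(\cdot)$ and of the spectral radius. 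A secondary point to check carefully is the interchange of the coordinate-wise supremum and infimum with both the norm and left multiplication, which is precisely what the attainment of the envelopes in the reformulated alternative guarantees.
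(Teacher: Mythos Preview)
The paper does not prove Theorem~\ref{T:Hset}; it is quoted from~\cite{Koz:LAA16}, and the text immediately following it says explicitly that the authors ``will not delve into the intricacies''. There is therefore no in-paper argument to compare your proposal against.

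That said, your outline is correct and is the natural nonlinear Perron--Frobenius route. The coordinate-wise envelopes $A_{\min}(x)x\le Ax\le A_{\max}(x)x$ you extract from H1--H2 plus compactness are exactly what the present paper derives and uses in Section~4 (equations~\eqref{E:Amaxineq} and~\eqref{E:Amaxineq-n}); your operators $S,T$ are order-preserving and positively homogeneous, the identification of $\|T^{n}(\mathbf{1})\|$ and $\|S^{n}(\mathbf{1})\|$ with the extremal product norms via $\|B\|_{\infty}=\|B\mathbf{1}\|_{\infty}$ is clean, and the Brouwer step on the simplex followed by classical Perron--Frobenius at the fixed point closes both inequalities. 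The minor technicality that H1--H2 are stated only for $x>0$ is harmless for Brouwer, since the coordinate-wise $\max$/$\min$ over a compact family defines $T$ and $S$ continuously on the whole closed simplex regardless.

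Your self-diagnosed ``main obstacle'' is actually a non-issue in the setting of Theorem~\ref{T:Hset}. In this paper ``positive'' means entrywise strictly positive (contrast Section~5, where the non-negative case is treated separately via $\overline{\Hset}_{*}$), so every $A\in\setA$ maps the closed simplex into its interior and any Brouwer fixed point is automatically interior; both uses of strict positivity you flag are then immediate. The approximation-by-strictly-positive-$\Hset$-sets you sketch is not needed here; that device belongs to the extension in Section~5, not to Theorem~\ref{T:Hset} itself.
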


As a matter of fact, in \cite{Koz:LAA16} a number of more
profound results are proved, but we will not delve into the intricacies.

\subsection{$\Hset$-sets of matrices}\label{S:Hsets}

The applicability of Theorem~\ref{T:Hset} essentially depends on how
constructive one will be able to describe the classes of $\Hset$-sets of
matrices. In~\cite{Koz:LAA16} it was shown that the sets of matrices with
independent row uncertainty and the linearly ordered sets of positive
matrices are $\Hset$-sets of matrices. However, as demonstrates
Example~\ref{Ex:nonH} below, not every set of positive matrices is an
$\Hset$-set. The one-element sets of matrices $\{0\}$ and $\{I\}$ consisting
of the zero and the identity matrices are also not $\Hset$-sets because the
related matrices are not positive.

\begin{example}\label{Ex:nonH}\rm
Let us consider the set of matrices $\setA=\{A_{1},A_{2}\}$, where
\[
A_{1}=\begin{pmatrix}a&a^{2}\\1&a\hphantom{~}\end{pmatrix},\quad
A_{2}=\begin{pmatrix}a\hphantom{~}&1\\a^{2}&a\end{pmatrix},\qquad a>0.
\]
Then $\max\{\rho(A_{1}),\rho(A_{2})\}=2a$ and
$\rho(A_{1}A_{2})=(1+a^{2})^{2}$. Therefore, for $a\neq 1$,
\[
\rho(\setA)\ge \|A_{1}A_{2}\|^{1/2}\ge\rho(A_{1}A_{2})^{1/2}>\max\{\rho(A_{1}),\rho(A_{2})\},
\]
which by Theorem~\ref{T:Hset} could not be valid if $\setA$ was an
$\Hset$-set of matrices.
\end{example}

To construct other classes of $\Hset$-sets of matrices let us ascertain some
general properties of such sets of matrices. Introduce the operations of
Minkowski addition and multiplication for sets of matrices:
\begin{align*}
\setA+\setB &:=\{A+B:A\in\setA,~ B\in\setB\},\\
\setA\setB &:=\{AB:A\in\setA,~ B\in\setB\},
\end{align*}
and also the operation of multiplication of a set of matrices by a number:
\[
t\setA=\setA t:=\{tA:t\in\bbR,~A\in\setA\}.
\]
The Minkowski addition of sets of matrices corresponds to the parallel
coupling of two independently operating asynchronous controllers, while the
Minkowski multiplication corresponds to the serial connection of such
asynchronous controllers.

\begin{remark}\label{R:nonass}\rm
In general, $\setA(\setB_{1}+\setB_{2})\neq \setA \setB_{1} +\setA\setB_{2}$
and $(\setA_{1}+\setA_{2})\setB\neq \setA_{1}\setB +\setA_{2}\setB$, i.e. the
Minkowski operations are not associative. In particular, $\setA+\setA \neq
2\setA$.
\end{remark}

Clearly, the operation of addition is \emph{admissible} if the matrices from
the set $\setA$ are of the same size as the matrices from the set $\setB$ ,
while the operation of multiplication is \emph{admissible} if the sizes of
the matrices from sets $\setA$ and $\setB$ are matched: the dimension of the
rows of the matrices from $\setA$ is the same as the dimension of the columns
of the matrices from $\setB$. There is no problem with matching of sizes when
one considers sets of square matrices of the same size.

\begin{theorem}[see \cite{Koz:LAA16}]\label{T:semiring} The following is true:
\begin{enumerate}[\rm(i)]
  \item $\setA+\setB\in\Hset(N,M)$, if $\setA,\setB\in\Hset(N,M)$;
  \item $\setA\setB\in\Hset(N,Q)$, if $\setA\in\Hset(N,M)$ and
      $\setB\in\Hset(M,Q)$;
  \item $t\setA=\setA t\in\Hset(N,M)$, if $t>0$ and $\setA\in\Hset(N,M)$.
\end{enumerate}
\end{theorem}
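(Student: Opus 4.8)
The plan is to verify each of the three closure properties directly from the defining hourglass conditions H1 and H2, checking that the relevant assertions for the new set reduce to assertions we already know hold for the constituent sets. The key observation throughout is that the defining inequalities $Ax \ge \tilde{A}x$ (and its reverse) compare vectors in $\bbR^N$ coordinate-wise, and both Minkowski addition and multiplication are monotone with respect to this order.

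For part (iii) I would start with the scalar multiplication case, which is essentially trivial: for $t>0$, the map $A \mapsto tA$ is a bijection of $\setA$ onto $t\setA$, and $(tA)x = t(Ax)$, so the inequalities $(tA)x \ge (t\tilde A)x$ and $(tA)x \le (t\tilde A)x$ hold if and only if the corresponding inequalities hold for $A, \tilde A$ themselves (here I use $t>0$ so that multiplication by $t$ preserves the order). Thus H1 and H2 transfer verbatim, and this case serves as a warm-up for the notational bookkeeping.

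For part (i), addition, I would fix $\tilde{C} = \tilde A + \tilde B \in \setA + \setB$ with $\tilde A \in \setA$, $\tilde B \in \setB$, and a vector $x>0$, and verify H1 (H2 being symmetric). Applying H1 for $\setA$ to $\tilde A$ and for $\setB$ to $\tilde B$ gives, in each case, either a universal inequality or a witness matrix. If both sets return the universal branch, then $Ax \ge \tilde A x$ and $Bx \ge \tilde B x$ for all $A, B$, so $(A+B)x \ge \tilde C x$ for all $C = A+B$, giving the first alternative of H1. Otherwise at least one set, say $\setA$, produces a witness $\bar A$ with $\bar A x \le \tilde A x$ and $\bar A x \neq \tilde A x$; taking $\bar C = \bar A + \tilde B$ then yields $\bar C x \le \tilde C x$ with $\bar C x \neq \tilde C x$, which is the second alternative. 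The main subtlety to handle carefully is the strictness condition $\bar C x \neq \tilde C x$: adding the fixed $\tilde B x$ to both sides preserves strict distinctness, so this goes through cleanly.

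Part (ii), multiplication, is the step I expect to be the main obstacle, because the product $\setA\setB$ mixes the two sets multiplicatively rather than additively and the order of the factors matters. The natural idea is: given $\tilde C = \tilde A \tilde B$ and $x > 0$, first apply the hourglass alternative for $\setB$ to the vector $x$ (note $\tilde B x$ need only be nonnegative, so one must check that the inner factor's output still lies in the regime where $\setA$'s alternative applies — this is where positivity of the matrices is essential, since it keeps the intermediate vector $\tilde B x$ positive or at least nonnegative and preserves monotonicity under left multiplication by matrices from $\setA$). Concretely, if $\setB$'s alternative supplies $\bar B$ with $\bar B x \le \tilde B x$, then left-multiplying by any positive $A$ preserves this inequality; combining with $\setA$'s alternative applied to $\tilde A$ at the (possibly new) vector $\tilde B x$ lets one assemble a witness $\bar C = \bar A \bar B$ or handle the universal branch. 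The delicate points are tracking which of the four combinations of branches occurs and ensuring the non-equality condition survives composition; I would organize this as a short case analysis, relying on the fact that positive matrices map the positive orthant into itself and act monotonically, so that each inequality established at the inner level propagates through the outer multiplication.
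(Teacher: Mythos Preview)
The paper does not give its own proof of this theorem; it is quoted from \cite{Koz:LAA16} and stated without argument. So there is nothing in the present paper to compare your proposal against. That said, your direct verification from H1/H2 is the natural approach and is essentially correct.

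A few remarks on your sketch. First, you omit the easy but necessary checks that the resulting sets are again compact and consist of positive matrices (both follow immediately: Minkowski sum and product are continuous images of the compact product set, and sums/products of positive matrices are positive). Second, your hesitation in part~(ii) that ``$\tilde B x$ need only be nonnegative'' is unfounded: by definition an $\Hset$-set consists of \emph{positive} matrices, so $\tilde B x>0$ and the alternative for $\setA$ applies at that vector without further ado. Third, the place where positivity (rather than mere nonnegativity) actually matters is the propagation of the strict-inequality clause in the product case: if $\bar B x\le \tilde B x$ with $\bar B x\neq\tilde B x$, then because $\tilde A$ has all entries strictly positive one gets $\tilde A\bar B x<\tilde A\tilde B x$ coordinate-wise, so $\bar C=\tilde A\bar B$ already serves as a witness and the ``four-case'' analysis collapses to a short dichotomy. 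Your choice $\bar C=\bar A\bar B$ also works, but the simpler witness makes the bookkeeping cleaner.
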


By Theorem~\ref{T:semiring} the totality of sets of square matrices
$\Hset(N,N)$ is endowed with additive and multiplicative binary operations,
but itself is not a group, neither additive nor multiplicative. However,
after adding the zero additive element $\{0\}$ and the identity
multiplicative element $\{I\}$ to $\Hset(N,N)$, the resulting totality
$\Hset(N,N)\cup\{0\}\cup\{I\}$ becomes a semiring~\cite{Golan99}.

The fact that the totality $\Hset(N,N)$ is endowed with the operations of
addition and multiplication means that, by connecting in a serial-parallel
manner independently operating asynchronous controllers that satisfy the
axioms H1 and H2, we again obtain an asynchronous controller satisfying the
axioms H1 and H2.

\begin{remark}\label{R:1}\rm
Theorem~\ref{T:semiring} implies that any finite sum of any finite products
of sets of matrices from $\Hset(N,N)$ is again a set of matrices from
$\Hset(N,N)$. Moreover, for any integers $n,d\ge1$, all the polynomial sets
of matrices
\begin{equation}\label{E:poly}
P(\setA_{1},\setA_{2},\ldots,\setA_{n})=
\sum_{k=1}^{d}\sum_{i_{1},i_{2},\ldots,i_{k}\in\{1,2,\ldots,n\}}
p_{i_{1},i_{2},\ldots,i_{k}}\setA_{i_{1}}\setA_{i_{2}}\cdots\setA_{i_{k}},
\end{equation}
where $\setA_{1},\setA_{2},\ldots,\setA_{n}\in\Hset(N,N)$ and the scalar
coefficients $p_{i_{1},i_{2},\ldots,i_{k}}$ are positive, belong to the set
$\Hset(N,N)$.

With the help of polynomials \eqref{E:poly} one can construct not only the
elements of the set $\Hset(N,N)$ but also the elements of arbitrary sets
$\Hset(N,M)$, by taking the arguments $\setA_{1},\setA_{2},\ldots,\setA_{n}$
from the sets $\Hset(N_{i},M_{i})$ with arbitrary matrix sizes $N_{i}\times
M_{i}$. One must only ensure that the products
$\setA_{i_{1}}\setA_{i_{2}}\cdots\setA_{i_{k}}$ were admissible, and the
expression \eqref{E:poly} would determine the sets of matrices of dimension
$N\times M$.

We have presented above two types of non-trivial $\Hset$-sets of matrices,
the sets of positive matrices with independent row uncertainty and the
linearly ordered sets of positive matrices. In this connection, let us denote
by $\Hset_{*}(N,M)$ the totality of all sets of $(N\times M)$-matrices which
can be obtained as the recursive expansion with the help of
polynomials~\eqref{E:poly} of the sets of positive matrices with independent
rows uncertainty and the sets of linearly ordered positive matrices. In other
words, $\Hset_{*}(N,M)$ is the totality of all sets of matrices that can be
represented as the values of superpositions of matrix polynomials
\eqref{E:poly} with the arguments of the polynomials of the `lowest level'
taken from the sets of the matrices belonging to
$\IRU(N_{i},M_{i})\cup\Lin(N_{i},M_{i})$.

As was noted in Remark~\ref{R:nonass} the Minkowski operations are not
associative. Therefore the  recursive extension of the set of positive
matrices with independent rows uncertainty and of linearly ordered positive
matrices forms a wider variety of matrices than the extension of the set of
positive matrices with independent rows uncertainty and of linearly ordered
positive matrices with the help of polynomials~\eqref{E:poly}.
\end{remark}

\section{Main result}

Theorems~\ref{T:Hset} and \ref{T:semiring}, and Remark~\ref{R:1} imply the
following statement:

\begin{theorem}\label{T:main}
Given a system \eqref{E:main} formed by a series-parallel recursive
connection of blocks \eqref{E:main-block} (i.e. represented by some graph
obtained by applying recursively series and/or parallel extensions starting
form one edge, and with blocks placed on its edges) corresponding to some
$\Hset$-sets of positive matrices $\setA_{i}$, $i=1,2,\ldots,Q$. Then the
question of the stability (stabilizability) of such a system can be
constructively resolved by finding a matrix that maximizes (minimizes) the
quantity $\rho(A)$ over the set of matrices $\setA$, where $\setA$ is the
Minkowski polynomial sum \eqref{E:poly} of the sets of matrices $\setA_{i}$,
$i=1,2,\ldots,Q$, corresponding to the structure of coupling of the related
blocks.
\end{theorem}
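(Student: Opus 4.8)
The plan is to reduce the statement to a direct application of the three results already established: first show that the matrix set $\setA$ describing the whole system is an $\Hset$-set, and then read off the conclusion from the spectral identities of Theorem~\ref{T:Hset}. First I would make precise the dictionary between the combinatorial structure of the connection graph and the algebraic operations on matrix sets: a parallel coupling of two blocks corresponds to the Minkowski sum $\setA'+\setA''$ of the associated sets, and a series coupling corresponds to the Minkowski product $\setA'\setA''$, exactly as indicated in the excerpt just after the definition of these operations. Since a series-parallel graph is, by definition, obtained from a single edge by recursively substituting an edge either by two edges in series or by two edges in parallel, the matrix set $\setA$ of the whole system is obtained from the block sets $\setA_{i}$ by the corresponding recursive sequence of Minkowski additions and multiplications; each such step is admissible because the series-parallel construction automatically matches input and output dimensions at every junction, and the overall system maps its state space into itself, so $\setA$ is a set of square $(N\times N)$-matrices.

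The second step is to establish that $\setA\in\Hset(N,N)$. Each $\setA_{i}$ is an $\Hset$-set of positive matrices by hypothesis, and Theorem~\ref{T:semiring} asserts precisely that $\Hset$-sets are closed under admissible Minkowski addition, admissible Minkowski multiplication, and multiplication by positive scalars. Applying this closure along the finite recursion that builds $\setA$ --- equivalently, recognizing $\setA$ as a value of a superposition of the matrix polynomials~\eqref{E:poly} as in Remark~\ref{R:1} --- I would conclude by induction on the construction that $\setA$ is again a compact $\Hset$-set of positive $(N\times N)$-matrices. Here one must be attentive to the non-associativity noted in Remark~\ref{R:nonass}: it is the \emph{recursive} expression dictated by the graph, not any flattened single polynomial, that defines $\setA$. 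The closure properties of Theorem~\ref{T:semiring} are, however, insensitive to the order of grouping and apply at every individual node of the recursion, so the induction goes through regardless of how the operations are nested.

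With $\setA\in\Hset(N,N)$ in hand, the final step is immediate. Theorem~\ref{T:Hset} gives the identities~\eqref{E:finrel}, namely $\rho(\setA)=\rho_{max}(\setA)=\max_{A\in\setA}\rho(A)$ and $\check{\rho}(\setA)=\rho_{min}(\setA)=\min_{A\in\setA}\rho(A)$. Combining the first identity with the stability criterion~\eqref{E-GSRle1}, the system is asymptotically stable if and only if $\max_{A\in\setA}\rho(A)<1$, so stability is decided by locating a matrix in $\setA$ of maximal ordinary spectral radius. Combining the second identity with the stabilizability criterion~\eqref{E-LSRle1}, a stabilizing switching law exists whenever $\min_{A\in\setA}\rho(A)<1$, so stabilizability is decided by locating a matrix in $\setA$ of minimal spectral radius. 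In both cases the computation of a limit of ``multiplicatively averaged'' norms of arbitrarily long products is replaced by a single optimization of the ordinary spectral radius $\rho(\cdot)$ over $\setA$.

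The part I expect to require the most care is not any single deduction but the bookkeeping of the first two steps: verifying that every connection graph of series-parallel type genuinely decomposes into a well-defined, dimension-consistent sequence of admissible Minkowski operations on the $\setA_{i}$, and that the resulting $\setA$ is exactly the set of one-step transition matrices of the system as a whole. The constructive nature of the conclusion then rests on the observation that, when the component sets $\setA_{i}$ are finite (as for finite IRU-sets or finite linearly ordered sets), the Minkowski operations yield a finite set $\setA$, over which the maximum or minimum of $\rho(A)$ is found by a finite search; this reduction of a limiting, exponentially growing computation to a tractable spectral optimization is precisely what answers Problems~\ref{P:1}--\ref{P:3} for such systems.
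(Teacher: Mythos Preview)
Your proposal is correct and follows precisely the route the paper takes: the paper's own proof is nothing more than the one-line remark that Theorem~\ref{T:main} is a direct consequence of Theorems~\ref{T:Hset} and~\ref{T:semiring} together with Remark~\ref{R:1}, and your write-up simply unpacks that implication in detail. If anything, you are more careful than the paper in spelling out the induction along the series-parallel recursion and the dimension-matching bookkeeping.
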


\begin{example}\label{Ex:poly}\rm
For the system $\setA$ in Fig.~\ref{F-1}, the input and output are related by
the equality
\[
x_{\text{out}}(n+1)=\bigl(A_{3}(n)(A_{1}(n)+A_{2}(n))+A_{4}(n)\bigr)x_{\text{in}}(n),
\]
where, for each $n$, the matrices $A_{1}(n),A_{2}(n),A_{3}(n)$ and $A_{4}(n)$
are randomly selected from the related sets: $A_{i}(n)\in\setA_{i}$,
$i=1,2,3,4$. Correspondingly, in this case all the possible values of the
transition matrix for the system $\setA$ can be obtained as the elements of
the following Minkowski polynomial sum of the sets of matrices
$\setA_{1},\setA_{2},\setA_{3},\setA_{4}$:
\[
P(\setA_{1},\setA_{2},\setA_{3},\setA_{4})=
\setA_{3}(\setA_{1}+\setA_{2})+\setA_{4}.
\]
\end{example}

\section{Construction of individual maximizing and minimizing sequences}

\subsection{One-step maximization}

We first consider the problem of maximizing the function $\nu(Ax)$, where
$x>0$, over all $A$ from the $\Hset$-set $\setA$, which is assumed to be
compact. By Assertion~H2 of the hourglass alternative, for any matrix
$\tilde{A}\in\setA$, either $Ax\le\tilde{A}x$ for all $A\in\setA$ or there
exists a matrix $\bar{A}\in\setA$ such that $\bar{A}x\ge\tilde{A}x$ and
$\bar{A}x\neq\tilde{A}x$. This together with the compactness of the set
$\setA$ implies the existence of a matrix $A^{(max)}\in\setA$ such that, for
all $A\in\setA$, the following inequality holds:
\begin{equation}\label{E:Amaxineq}
  Ax\le A^{(max)}x.
\end{equation}
Let us notice that the matrix $A^{(max)}$ depends on the vector $x$, and
therefore, when needed, we will write $A^{(max)}=A^{(max)}_{x}$. Moreover,
the matrix $A^{(max)}_{x}$ is generally determined non-uniquely by the vector
$x$.

\begin{theorem}\label{T:1stepmax}
Let $\setA$ be a compact $\Hset$-set of positive $(N\times N)$-matrices,
$\nu(\cdot)$ be a coordinate-wise monotone function, and
$x\in\mathbb{R}^{N}$, $x>0$, be a vector.

\begin{enumerate}[\rm(i)]
\item Then the maximum of the
    function $\nu(Ax)$ over $A\in\setA$ is  attained at the matrix $A^{(max)}=A^{(max)}_{x}$, that is,
\[
\max_{A\in\setA}\nu(Ax)= \nu(A^{(max)}x).
\]
\item If the maximum of the function $\nu(Ax)$ over $A\in\setA$ is attained
    at a matrix $A_{0}\in\setA$ and the function $\nu(\cdot)$ is strictly
    coordinate-wise monotone, then $A_{0}x=A^{(max)}_{x}x$.
\end{enumerate}
\end{theorem}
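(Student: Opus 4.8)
The plan is to exploit the existence of the dominating matrix $A^{(max)}_{x}$ guaranteed by the discussion preceding the theorem. Recall that $A^{(max)}_{x}$ satisfies $Ax \le A^{(max)}_{x}x$ for every $A \in \setA$, and since $x > 0$ and all matrices are positive, the vector $A^{(max)}_{x}x$ is itself a nonnegative vector. The entire proof rests on translating these coordinate-wise vector inequalities into inequalities between values of $\nu$, using precisely the monotonicity hypotheses.

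For part (i), I would argue as follows. Fix an arbitrary $A \in \setA$. By \eqref{E:Amaxineq} we have $Ax \le A^{(max)}_{x}x$, meaning each coordinate of $Ax$ is no larger than the corresponding coordinate of $A^{(max)}_{x}x$. Both vectors have nonnegative coordinates because the matrices are positive and $x > 0$, so $\nu$ is defined at both points. Since $\nu(\cdot)$ is coordinate-wise monotone (non-decreasing in each coordinate), the inequality $Ax \le A^{(max)}_{x}x$ yields $\nu(Ax) \le \nu(A^{(max)}_{x}x)$. As $A$ was arbitrary, this shows $\sup_{A \in \setA} \nu(Ax) \le \nu(A^{(max)}_{x}x)$. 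The reverse inequality is immediate since $A^{(max)}_{x} \in \setA$, so the supremum is attained and equals $\nu(A^{(max)}_{x}x)$, which is exactly the claimed identity.

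For part (ii), suppose the maximum is attained at some $A_{0} \in \setA$ and that $\nu$ is strictly coordinate-wise monotone. From part (i) we know $\nu(A_{0}x) = \nu(A^{(max)}_{x}x)$, and from \eqref{E:Amaxineq} applied to $A_0$ we have the coordinate-wise inequality $A_{0}x \le A^{(max)}_{x}x$. The goal is to promote this inequality to an equality. I would argue by contradiction: if $A_{0}x \neq A^{(max)}_{x}x$, then since $A_{0}x \le A^{(max)}_{x}x$ there is at least one coordinate in which $A_{0}x$ is strictly smaller. Strict coordinate-wise monotonicity of $\nu$ then forces $\nu(A_{0}x) < \nu(A^{(max)}_{x}x)$, contradicting the equality just established. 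Hence $A_{0}x = A^{(max)}_{x}x$.

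The only genuinely delicate point is that strict coordinate-wise monotonicity is a statement about increasing one coordinate while the others stay fixed, so I must check that a strict decrease in a single coordinate (with the remaining coordinates satisfying $\le$) indeed produces a strict decrease in $\nu$. This follows by comparing through an intermediate vector that agrees with $A^{(max)}_{x}x$ in the strict coordinate but with $A_{0}x$ elsewhere: strictness in that one coordinate gives a strict drop, and ordinary monotonicity in the remaining coordinates gives a (weak) drop, so the total is strict. I expect this interpolation argument to be the main obstacle, since everything else is a direct application of the dominating-matrix inequality together with the monotonicity definitions.
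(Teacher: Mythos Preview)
Your argument is correct and follows essentially the same route as the paper: both parts are derived directly from the dominating inequality \eqref{E:Amaxineq} together with (strict) coordinate-wise monotonicity, and part~(ii) is handled by the same contradiction. Your interpolation remark for the strict case is a welcome extra detail that the paper leaves implicit.
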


\begin{proof}
Assertion~(i) directly follows from inequality \eqref{E:Amaxineq} and the
coordinate-wise monotonicity of the function $\nu(\cdot)$.

To prove Assertion~(ii) let us notice that
\[
A_{0}x\le A^{(max)}_{x}x.
\]
If here $A_{0}x\neq A^{(max)}_{x}x$ then at least one coordinate of the
vector $A^{(max)}_{x}x$ should be strictly greater than the respective
coordinate of the vector $A_{0}x$. Then, due to the strict coordinate-wise
monotonicity of the function $\nu(\cdot)$, the following inequality holds:
\[
\nu(A_{0}x)<\nu(A^{(max)}_{x}x),
\]
which contradicts to the assumption that the maximum of the function
$\nu(Ax)$ over $A\in\setA$ is attained at the matrix $A_{0}\in\setA$.
Therefore, $A_{0}x=A^{(max)}_{x}x$, and Assertion~(ii) is proved.
\end{proof}

\begin{remark}\label{Rem:ii}\rm
If the function $\nu(\cdot)$ is coordinate-wise monotone but not strictly
coordinate-wise monotone then, in general, Assertion~(ii) of
Theorem~\ref{T:1stepmax} is not valid.
\end{remark}

\begin{remark}\label{Rem:1}\rm
The construction of the matrix $A^{(max)}$ does not depend on the function
$\nu(\cdot)$.
\end{remark}

\subsection{Multi-step maximization: solution of Problem~\ref{P:4}}

We turn now to the question of determining the quantity \eqref{E:nun} for
some $n>1$ and $x\in\mathbb{R}^{N}$, $x> 0$. With this aim in view, let us
construct sequentially the matrices $A^{(max)}_{i}$, $i=1,2,\ldots,n$, as
follows:

\begin{itemize}
\item the matrix $A^{(max)}_{1}$, depending in the vector $x_{0}=x$, is
    constructed in the same way as was done in the previous section:
    $A^{(max)}_{1}=A^{(max)}_{x_{0}}$;

\item if the matrices $A^{(max)}_{i}$, $i=1,2,\ldots,k$, have already
    constructed then the matrix $A^{(max)}_{k+1}$, depending on the vector
\[
x_{k}=A^{(max)}_{k}\cdots A^{(max)}_{1}x,
\]
is constructed to maximize the function
\[
\nu(AA^{(max)}_{k}\cdots
A^{(max)}_{1}x)=\nu(Ax_{k})
\]
over all $A\in\setA$ in the same manner as was done in the previous
section. So, the matrix $A^{(max)}_{k+1}$ is defined by the equality
$A^{(max)}_{k+1}=A^{(max)}_{x_{k}}$.
\end{itemize}

By definition of the matrices $A^{(max)}_{i}$ then, in view of
\eqref{E:Amaxineq}, for all $A\in\setA$ the following inequalities hold:
\begin{align}
\notag  Ax&\le A^{(max)}_{1}x,\\
\notag    AA^{(max)}_{1}x&\le A^{(max)}_{2}A^{(max)}_{1}x,\\
\notag    &\ldots\\
\notag  AA^{(max)}_{n-1}\cdots A^{(max)}_{1}x&\le A^{(max)}_{n}\cdots A^{(max)}_{1}x,
\intertext{which implies}
\label{E:Amaxineq-n}
A_{n}\cdots A_{1}x&\le A^{(max)}_{n}\cdots A^{(max)}_{1}x
\end{align}
for all $A_{n},\ldots,A_{1}\in\setA$.

\begin{theorem}\label{T:nstepmax}
Let $\setA$ be a compact $\Hset$-set of positive $(N\times N)$-matrices,
$\nu(\cdot)$ be a coordinate-wise monotone function, and
$x\in\mathbb{R}^{N}$, $x>0$, be a vector.

\begin{enumerate}[\rm(i)]
\item Then the maximum of the function $\nu(A_{n}\cdots A_{1}x)$ over
    $A_{1},\ldots,A_{n}\in\setA$ is attained at the set of matrices
    $A^{(max)}_{1},\ldots, A^{(max)}_{n}$, that is,
\[
\max_{A_{n},\ldots,A_{1}\in\setA}\nu(A_{n}\cdots A_{1}x)=
\nu(A^{(max)}_{n}\cdots A^{(max)}_{1}x).
\]

\item Let $\setA$ be a compact $\Hset$-set of positive matrices. If the
    maximum of the function $\nu(A_{n}\cdots A_{1}x)$ over
    $A_{n},\ldots,A_{1}\in\setA$ is attained at a set of matrices
    $\tilde{A}_{1},\ldots, \tilde{A}_{n}$ and the function $\nu(\cdot)$ is
    strictly coordinate-wise monotone, then
    \begin{equation}\label{E:ieqs}
    \tilde{A}_{i}\cdots\tilde{A}_{1}x=A^{(max)}_{i}\cdots A^{(max)}_{1}x,\qquad i=1,2,\ldots,n.
    \end{equation}
\end{enumerate}
\end{theorem}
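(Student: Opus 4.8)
The plan is to establish Assertion~(i) as an almost immediate consequence of the chain of inequalities \eqref{E:Amaxineq-n} already derived before the statement, and then to prove Assertion~(ii) by a sequential (inductive) application of the one-step uniqueness result from Theorem~\ref{T:1stepmax}(ii). Throughout I write $x_k = A^{(max)}_k\cdots A^{(max)}_1 x$ for the intermediate vectors produced by the greedy maximizing construction, noting that each $x_k > 0$ since $\setA$ consists of positive matrices and $x>0$.

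For Assertion~(i), the master inequality \eqref{E:Amaxineq-n} gives $A_n\cdots A_1 x \le A^{(max)}_n\cdots A^{(max)}_1 x$ for every choice $A_1,\ldots,A_n\in\setA$. First I would apply the coordinate-wise monotonicity of $\nu(\cdot)$ directly to this vector inequality, yielding $\nu(A_n\cdots A_1 x)\le\nu(A^{(max)}_n\cdots A^{(max)}_1 x)$ for all admissible tuples. Since the tuple $A^{(max)}_1,\ldots,A^{(max)}_n$ is itself one of the competitors, the supremum is attained and equals $\nu(A^{(max)}_n\cdots A^{(max)}_1 x)$, which is exactly the claimed equality.

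For Assertion~(ii), the plan is an induction on $i$ establishing \eqref{E:ieqs}. Let $\tilde A_1,\ldots,\tilde A_n$ be an optimal tuple for the strictly monotone $\nu$. The key structural observation is that the \emph{tail} of an optimal product must itself be optimal for the corresponding one-step problem seen from the partial product. Concretely, I would argue that if the full maximum is attained at $\tilde A_1,\ldots,\tilde A_n$, then in particular the last factor $\tilde A_n$ maximizes $\nu(A\,\tilde A_{n-1}\cdots\tilde A_1 x)$ over $A\in\setA$ (otherwise replacing $\tilde A_n$ by a better one would strictly increase the value, contradicting optimality). Setting $y = \tilde A_{n-1}\cdots\tilde A_1 x > 0$, Theorem~\ref{T:1stepmax}(ii) then forces $\tilde A_n\, y = A^{(max)}_y\, y$. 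The subtlety — and the main obstacle — is that $A^{(max)}_y$ is the greedy matrix attached to the vector $y$, whereas the construction in the theorem attaches $A^{(max)}_n$ to the vector $x_{n-1}=A^{(max)}_{n-1}\cdots A^{(max)}_1 x$; these agree only once one knows $y = x_{n-1}$, i.e. once the induction hypothesis $\tilde A_{n-1}\cdots\tilde A_1 x = A^{(max)}_{n-1}\cdots A^{(max)}_1 x$ is in force. Hence the induction must be run from the \emph{innermost} factor outward: I would first show $\tilde A_1 x = A^{(max)}_1 x$ (this is exactly Theorem~\ref{T:1stepmax}(ii) applied at $x$, using that $\tilde A_1$ must be one-step optimal for $\nu(A\,x)$), and then propagate.

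The delicate point requiring care is justifying that each individual factor $\tilde A_k$ is one-step optimal given the earlier factors, and that the greedy and optimal partial products coincide. Assuming by induction that $\tilde A_{k-1}\cdots\tilde A_1 x = A^{(max)}_{k-1}\cdots A^{(max)}_1 x =: x_{k-1}$, I would argue that $\tilde A_k$ must maximize $\nu(A\, x_{k-1})$ over $A\in\setA$: indeed, the value $\nu(\tilde A_n\cdots\tilde A_1 x)$ can be written as $\nu$ evaluated at a product whose innermost part is $\tilde A_k\, x_{k-1}$, and by \eqref{E:Amaxineq-n} together with strict monotonicity any improvement in $\tilde A_k\, x_{k-1}$ in the coordinate-wise order would strictly improve the final value — so $\tilde A_k\, x_{k-1}$ cannot be dominated by $A^{(max)}_k\, x_{k-1} = A^{(max)}_{x_{k-1}} x_{k-1}$ without equality. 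Applying Theorem~\ref{T:1stepmax}(ii) with the vector $x_{k-1}$ then yields $\tilde A_k\, x_{k-1} = A^{(max)}_k\, x_{k-1}$, which is precisely $\tilde A_k\cdots\tilde A_1 x = A^{(max)}_k\cdots A^{(max)}_1 x$ by the induction hypothesis, completing the step and hence the induction.
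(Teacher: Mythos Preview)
Your plan is correct and essentially coincides with the paper's proof: part~(i) is identical, and for part~(ii) the paper also takes the smallest index $i_{0}$ at which \eqref{E:ieqs} fails, uses positivity of the matrices in $\setA$ to propagate the strict coordinate inequality forward to step $n$, and then invokes strict monotonicity of $\nu$ to contradict optimality --- precisely the mechanism of your ``delicate point'' paragraph, phrased as minimal counterexample rather than induction. One caution: your intermediate claims that ``$\tilde A_{1}$ must be one-step optimal for $\nu(Ax)$'' and ``$\tilde A_{k}$ must maximize $\nu(Ax_{k-1})$'' are not a~priori obvious (optimality of the full product does not by itself say anything about $\nu$ applied to a single factor), and in fact they are \emph{consequences} of the equality $\tilde A_{k}x_{k-1}=A^{(max)}_{k}x_{k-1}$ rather than a route to it; the detour through Theorem~\ref{T:1stepmax}(ii) is therefore unnecessary, since your propagation argument already delivers the equality directly --- and that is exactly how the paper argues.
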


\begin{proof}
Assertion~(i) directly follows from inequality \eqref{E:Amaxineq-n} and the
coordinate-wise monotonicity of the function $\nu(\cdot)$.

To prove Assertion~(ii) let us observe that
\begin{align*}
  \tilde{A}_{1}x&\le A^{(max)}_{1}x,\\
  \tilde{A}_{2}\tilde{A}_{1}x&\le A^{(max)}_{2}A^{(max)}_{1}x,\\
  &\ldots\\
\tilde{A}_{n}\tilde{A}_{n-1}\cdots \tilde{A}_{1}x&\le A^{(max)}_{n}\cdots A^{(max)}_{1}x,
\end{align*}
If here equalities \eqref{E:ieqs} are not valid for some $i=i_{0}$ but valid
for all $i<i_{0}$ then at least one coordinate of the vector
$A^{(max)}_{i_{0}}\cdots A^{(max)}_{1}x$ is strictly greater than the respective coordinate of
the vector $\tilde{A}_{i_{0}}\cdots \tilde{A}_{1}x$. Then, due to the positivity of the matrices
from the set $\setA$, for each $j\ge i_{0}$ there is valid the inequality
\[
\tilde{A}_{j}\tilde{A}_{j-1}\cdots \tilde{A}_{1}x\le
A^{(max)}_{j}\cdots A^{(max)}_{1}x,
\]
where at least one coordinate of the vector $A^{(max)}_{j}\cdots
A^{(max)}_{1}x$ is strictly greater\footnote{This argument `fails', if we
assume that the matrices constituting the set $\setA$ are only positive.}
than the respective coordinate of the vector
$\tilde{A}_{j}\tilde{A}_{j-1}\cdots \tilde{A}_{1}x$. Then, due to the strict
coordinate-wise monotonicity of the function $\nu(\cdot)$, for $j=n$ we
obtain the inequality
\[
\nu(\tilde{A}_{n}\cdots \tilde{A}_{1}x)<
\nu(A^{(max)}_{n}\cdots A^{(max)}_{1}x),
\]
contradicting to the assumption that the maximum of the function
$\nu(A_{n}\cdots A_{1}x)$ over $A_{n},\ldots,A_{1}\in\setA$ is attained at
the set of matrices $\tilde{A}_{1}\ldots, \tilde{A}_{n}$. Therefore,
equalities \eqref{E:ieqs} should be valid for all $i=1,2,\ldots,n$, and
Assertion~(ii) is proved.
\end{proof}

\begin{remark}\label{Rem:2}\rm
The construction of each subsequent matrix $A^{(max)}_{i}$ is `positional'
or, what is the same, it is made in accordance with the `principles of
dynamic programming', that is, only based on the information known up to this
step. At the same time, this construction does not depend on the function
$\nu(\cdot)$, and hence on the complexity of its calculation!
\end{remark}

\section{Non-negative matrices}

In the previous sections, all the considerations have been carried out for
classes of matrices with positive elements. Sometimes, the requirement of
positivity of the related matrices may be restrictive, however the transition
to the matrices with arbitrary elements is hardly possible in the context of
the treated problems, see~\cite{Koz:LAA16} and the discussion therein. Even
the transition to matrices with non-negative elements is not always possible,
since in general, for such matrices, the constructions and statements of
Section~\ref{S:IRU} are no longer valid. Nevertheless, in one particular case
of practical interest the transition to non-negative matrices is possible.

Denote by $\overline{\IRU}(N,M)$ the totality of all IRU-sets of non-negative
$(N\times M)$-matrices, and by $\overline{\Lin}(N,M)$ denote the totality of
all sets $\setA=\{A_{1},A_{2},\ldots,A_{n}\}$ of non-negative $(N\times
M)$-matrices satisfying the inequalities $0\le A_{1}\le A_{2}\le\cdots\le
A_{n}$. The totalities of sets of non-negative matrices
$\overline{\IRU}(N,M)$ and $\overline{\Lin}(N,M)$ can be naturally treated as
a kind of `closure' of the related totalities of positive matrices
$\IRU(N,M)$ and $\Lin(N,M)$.

Now, denote by $\overline{\Hset}_{*}(N,M)$ the totality of all sets of
matrices that can be represented as the values of polynomials \eqref{E:poly}
with the arguments taken from the sets of matrices belonging to
$\overline{\IRU}(N_{i},M_{i})\cup\overline{\Lin}(N_{i},M_{i})$. In this case,
the totality $\overline{\Hset}_{*}(N,M)$ is no longer belongs to $\Hset(N,M)$
but, as was shown in~\cite{Koz:LAA16}, for each matrix
$\setA\in\overline{\Hset}_{*}(N,N)$ equalities \eqref{E:finrel} remain valid,
i.e. an analog of Theorem~\ref{T:Hset} holds.

\section{Conclusion}
One of the most prominent problem in the design of control systems with
switching components is that of evaluating (computing) the joint or lower
spectral radii of the resulting system which determine its stability or
stabilizability, respectively.

The approach to resolving this problem proposed in the article is fulfilled
in compliance with the concept of modular design of control systems. It can
be compared with the creation of toys with the help of a LEGO$^{\circledR}$
kit.

Recall that any LEGO$^{\circledR}$ kit consists of pieces (bricks and plates
with stubs) arranging which in almost arbitrary order (oriented due to the
presence of stubs) one can create a variety of structures.

Each $\Hset$-set of matrices $\setA$ also can be interpreted as a kind of a
LEGO$^{\circledR}$ kit for assembling control systems whose pieces (bricks
and plates in a LEGO$^{\circledR}$ kit) are the switching blocks
(controllers) with the transition characteristics determined by the matrix
sets $\setA_{i}\in\setA$. Then, as was shown above, \emph{any}
series-parallel recursive connection of these blocks will result in creation
of a system whose joint and lower spectral radii \emph{always} can be
computed constructively by formula~\eqref{E:finrel}.

\section*{Acknowledgments}
The work was carried out at the Kotel'nikov Institute of Radio-engineering
and Electronics, Russian Academy of Sciences, and was funded by the Russian
Science Foundation, Project No. 16-11-00063.

\bibliographystyle{elsarticle-num-nourl}
\bibliography{kozbib,kozpub}

\end{document}